\theoremstyle{plain}
\newtheorem{theorem}{Theorem}
\newtheorem{thmx}{Theorem}
\newtheorem{corollary}{Corollary}
\newtheorem{remark}{Remark}
\numberwithin{equation}{section}
\numberwithin{lemma}{section}
\numberwithin{proposition}{section}
\numberwithin{corollary}{section}
\numberwithin{remark}{section}
\begin{document}

\title{A note on Hang-Wang's hemisphere rigidity theorem}

\author{Mijia Lai}
\address{School of Mathematical Sciences,
Shanghai Jiao Tong University}
\email{laimijia@sjtu.edu.cn}
\thanks{Lai's research is supported in part by National Natural Science Foundation of China No. 11871331.}
\date{}

\begin{abstract}
Let $(M,g)$ be a compact manifold with boundary and $Ric_g\geq (n-1)g$, Hang and Wang proved
that $(M,g)$ is isometric to the standard hemisphere if $\partial M$ is convex and isometric to $\mathbb{S}^{n-1}(1)$.
We prove some rigidity theorems when $\partial M $ is isometric to a product manifold where one factor is the standard sphere.
\end{abstract}
\maketitle
\section{Introduction}
Rigidity phenomenon of manifolds under various curvature conditions is a very interesting and important subject in differential geometry.
A prominent example is the rigidity part of the positive mass theorem proved by Schoen-Yau~\cite{SY} and Witten~\cite{W}, which generates
enormous study on rigidity phenomena with assumptions on the scalar curvature. In particular, by Bartnik's version of the positive mass
theorem, any metric on $\mathbb{R}^n$ with nonnegative scalar curvature, which agrees with the standard Euclidean metric
outside a compact set, must be flat. For further developments, we refer the reader to the survey~\cite{B} and
references therein.

In an attempt to tackle the Min-Oo conjecture(remarkably disproved later by Brendle-Marques-Neves~\cite{BMN})
, Hang and Wang~\cite{HW} proved an interesting rigidity theorem
for manifolds with boundary under the assumption of positive Ricci curvature.

\begin{thmx}[Hang-Wang] \label{TA}
Let $(M, g)$ be a compact manifold with boundary and suppose
\begin{itemize}
\item $Ric\geq (n-1)g$;
\item $\partial M $ is isometric to $\mathbb{S}^{n-1}(1)$;
\item $\partial M$ is convex, i.e., the second fundamental form $h\geq 0$.
\end{itemize}
Then $(M, g)$ is isometric to the standard hemisphere $\mathbb{S}^{n}_{+}(1)$.
\end{thmx}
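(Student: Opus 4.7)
The plan is to apply Reilly's integral formula on $M$ to test functions constructed from the coordinate functions of $\mathbb{S}^{n-1}(1) \subset \mathbb{R}^n$, pulled back via the boundary isometry $\partial M \cong \mathbb{S}^{n-1}(1)$. Denoting these pulled-back functions on $\partial M$ by $\phi_1, \ldots, \phi_n$, they satisfy the three algebraic identities
\[
\sum_{i=1}^n \phi_i^2 \equiv 1, \qquad \sum_{i=1}^n d\phi_i \otimes d\phi_i = g_{\partial M}, \qquad \Delta_{\partial M}\phi_i = -(n-1)\phi_i,
\]
which encode the round-sphere structure of $\partial M$ and will meet the curvature bounds in the ensuing computation.

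For each $i$, I would solve the Dirichlet problem $\Delta u_i + n u_i = 0$ in $M$ with $u_i|_{\partial M} = \phi_i$; by Reilly's sharp eigenvalue estimate $\mu_1 \geq n$ under $Ric \geq (n-1)g$ and $H \geq 0$, one may assume strict inequality (the borderline case being itself a rigidity result via Reilly, which reduces to the desired conclusion), so that the Fredholm alternative yields a unique smooth solution. Applying Reilly's formula to each $u_i$, using $|\nabla^2 u_i|^2 \geq (\Delta u_i)^2/n = n u_i^2$, the Ricci bound, integration by parts, and the boundary eigenvalue relation, produces after summation over $i$ an integral inequality whose right side is controlled via Cauchy--Schwarz using $\sum_i \phi_i^2 = 1$ and whose left side is controlled via the convexity $h \geq 0$.

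The aim is to show that every inequality in this chain must be an equality. Equality in $|\nabla^2 u_i|^2 \geq (\Delta u_i)^2/n$ yields the Obata-type equation $\nabla^2 u_i = -u_i\,g$ for each $i$; equality in the Ricci bound, combined with the pointwise Cauchy--Schwarz on the boundary term and $H \geq 0$, should force $h \equiv 0$ (totally geodesic boundary) and $(u_i)_\nu \equiv 0$ on $\partial M$. Defining $u_{n+1} := \sqrt{1 - \sum_i u_i^2}$ (meaningful because $\sum_i u_i^2$ is subharmonic with boundary value $1$, hence bounded above by $1$ by the maximum principle) completes the coordinate system, and one verifies using the Obata equations and the boundary data that $U = (u_1, \ldots, u_{n+1}) : M \to \mathbb{S}^n(1)$ is an isometry onto the closed hemisphere.

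The main difficulty lies in extracting equality from the summed Reilly inequality: one must combine, on the boundary, the pointwise Cauchy--Schwarz $\sum_i \phi_i(u_i)_\nu \leq \sqrt{\sum_i (u_i)_\nu^2}$ with the AM--GM identity $1 + \sum_i (u_i)_\nu^2 \geq 2\sqrt{\sum_i(u_i)_\nu^2}$ against $H \geq 0$ to pin down that the boundary is totally geodesic rather than merely convex. A secondary challenge is verifying smoothness and surjectivity of $U$ at the interior locus where $u_{n+1} = 0$, which one expects to consist of a single point determined by the Obata structure and serving as the ``north pole'' of the target hemisphere.
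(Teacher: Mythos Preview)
The paper does not prove Theorem~A itself (it is quoted from Hang--Wang), but its proof of Theorem~1 follows Hang--Wang's argument closely, so that is the benchmark. That argument works with a \emph{single} coordinate function $f$, solves $\Delta u+nu=0$ with $u|_{\partial M}=f$, and shows by the Bochner formula that $\phi:=u^{2}+|\nabla u|^{2}$ is subharmonic. The heart of the proof is a \emph{pointwise} Hopf--lemma computation of $\partial_\nu\phi$ at a boundary maximum, which forces $\phi$ to be constant and hence $\nabla^{2}u+ug=0$. Your plan is a genuinely different route: sum Reilly's integral identity over all $n$ coordinate functions and try to squeeze equality out of the resulting boundary inequality.

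The gap is precisely at what you call the ``main difficulty,'' and the tools you list do not close it. After applying $|\nabla^{2}u_i|^{2}\ge(\Delta u_i)^{2}/n$, $Ric\ge(n-1)g$, $\bar\Delta\phi_i=-(n-1)\phi_i$, and $\sum_i d\phi_i\otimes d\phi_i=g_{\partial M}$ in the summed Reilly formula, one obtains
\[
(n-1)\int_{\partial M}\sum_i \phi_i\,(u_i)_\nu \;\ge\; \int_{\partial M} H\Bigl(1+\sum_i (u_i)_\nu^{2}\Bigr)\;\ge\;0.
\]
Your proposed Cauchy--Schwarz and AM--GM bounds give, pointwise,
\[
(n-1)\sum_i\phi_i\,(u_i)_\nu \;\le\;(n-1)\sqrt{\textstyle\sum_i(u_i)_\nu^{2}}\;\le\;\tfrac{n-1}{2}\Bigl(1+\textstyle\sum_i(u_i)_\nu^{2}\Bigr),
\]
but this is an \emph{upper} bound with the coefficient $(n-1)/2$, whereas the Reilly inequality has the coefficient $H$, which is only assumed $\ge 0$. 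The two chains run in the same direction and never meet; there is no sandwich, hence no forced equality, and you cannot conclude the Obata equation. This is exactly why Hang--Wang introduced the pointwise Hopf--lemma argument on $u^{2}+|\nabla u|^{2}$: the convexity hypothesis $h\ge 0$ is too weak to make the purely integral approach close. (A smaller issue: your claim that $\sum_i u_i^{2}$ is subharmonic is not justified before the Obata equation is known, since $\Delta(u_i^{2})=2|\nabla u_i|^{2}-2nu_i^{2}$ has no sign a priori.)
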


The above three curvature assumptions are reminiscent of Serrin's overdetermined problem~\cite{S}. Roughly speaking,
the Ricci curvature can be viewed as the Laplacian of the metric. The boundary metric and the second fundamental form can be viewed
as Dirichlet and Neumann boundary conditions for the metric respectively.

In this short note, we generalize Hang-Wang's rigidity theorem in three settings depending on
the sign of the Ricci curvature lower bound. We assume the boundary is isometric to a product manifold
with one of the factors being isometric to a round sphere. The proofs are based on
the proof of Theorem~\ref{TA} with a new twist: the Obata equation with Robin boundary condition, which
has been carefully studied by Chen-Lai-Wang ~\cite{CLW}. In our proofs, we first obtain functions on $M$ satisfying Obata equations following the method of Hang-Wang.
Then we gain precise information on the second fundamental form of the inward equidistance hypersurfaces. The precise geometry of $M$ then follows.

\begin{theorem} \label{T1} Let $(M, g)$ be a compact manifold with boundary and suppose
\begin{itemize}
\item $Ric_g\geq (n-1)g$;
\item $\partial M $ is isometric to $\mathbb{S}^{k-1}(\sin \theta)\times (N, g_N)$, where $\theta \in(0, \frac{\pi}{2})$ and
$N$ is an $(n-k)$ dimensional closed manifold with metric $g_N$;
\item the second fundamental form $h$ on $\partial M$ satisfies $h(w,w)\geq \cot \theta |w|^2$, $\forall w$ tangent to $\mathbb{S}^{k-1}$;
\item $H\geq (k-1)\cot \theta-(n-k) \tan \theta \geq 0$.
\end{itemize}
Then $M$ is isometric to the doubly warped product $dr^2+\sin^2(r) g_{\mathbb{S}^{k-1}(1)}+ \frac{\cos^2(r)}{\cos^2 \theta} g_N, r\in [0, \theta]$
and necessarily $Ric_{g_N}\geq\frac{(n-k-1)}{\cos^2 \theta} g_N$ in case $n-k\geq 2$.
\end{theorem}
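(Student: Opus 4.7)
Following Hang--Wang's strategy, the plan is to produce $k$ functions $u_1,\ldots,u_k$ on $M$ satisfying the Obata equation $\nabla^2 u_j + u_j g = 0$, and then reconstruct the doubly warped product from them. Using the isometric identification $\partial M \cong \mathbb{S}^{k-1}(\sin\theta) \times (N, g_N)$, let $y_1,\ldots,y_k$ be the Euclidean coordinate functions on $\mathbb{S}^{k-1}(\sin\theta)\subset\mathbb{R}^k$, pulled back to $\partial M$ constantly along $N$. Writing $\nabla'$ and $\Delta'$ for the intrinsic gradient and Laplacian on $\partial M$, these satisfy
\[
\sum_j y_j^2 = \sin^2\theta,\quad \sum_j |\nabla' y_j|^2 = k-1,\quad \Delta' y_j = -\tfrac{k-1}{\sin^2\theta}\, y_j.
\]
On the candidate doubly warped product, the Obata functions are $u_j = \sin(r)\,\omega_j$ with $\omega_j$ the $j$-th coordinate on $\mathbb{S}^{k-1}(1)$; they restrict to $y_j$ on $\partial M$ and satisfy the Robin condition $\partial_\nu u_j = \cot\theta\cdot u_j$.

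I would first construct each $u_j$ on $M$ by solving the Robin eigenvalue problem $\Delta u + nu = 0$ on $M$ with $\partial_\nu u = \cot\theta\cdot u$ on $\partial M$, in the framework of the Obata-Robin analysis of \cite{CLW}. Applying Reilly's formula with the Lichnerowicz-Obata inequality $|\nabla^2 u_j|^2 \geq (\Delta u_j)^2/n = n u_j^2$, the hypothesis $Ric \geq (n-1)g$, and integration by parts yields after rearrangement
\[
\int_M\!\bigl(|\nabla^2 u_j|^2 - n u_j^2\bigr)\,dV + \int_M\!\bigl(Ric(\nabla u_j,\nabla u_j) - (n-1)|\nabla u_j|^2\bigr)\,dV = -\mathcal{B}_j,
\]
where
\[
\mathcal{B}_j := \int_{\partial M}\!\bigl[(n-1)\,u_j\,\partial_\nu u_j + 2\,\partial_\nu u_j\,\Delta' u_j + H(\partial_\nu u_j)^2 + h(\nabla' u_j, \nabla' u_j)\bigr]\,dA.
\]
The left-hand side is non-negative, so $\mathcal{B}_j \leq 0$. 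Summing over $j$ and substituting $u_j|_{\partial M} = y_j$, $\partial_\nu u_j = \cot\theta\,y_j$, together with the identities above, the bound $h(w,w) \geq \cot\theta\,|w|^2$ on $\mathbb{S}^{k-1}$-directions, and $H \geq (k-1)\cot\theta - (n-k)\tan\theta$, a direct trigonometric computation gives $\sum_j \mathcal{B}_j \geq 0$. Therefore $\sum_j \mathcal{B}_j = 0$, equality propagates, and one obtains $\nabla^2 u_j + u_j g = 0$ on $M$ for each $j$, saturation of $h$ on $\mathbb{S}^{k-1}$-directions, and $H = (k-1)\cot\theta - (n-k)\tan\theta$.

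With the Obata equations in hand, set $\rho = \sqrt{\sum_j u_j^2}$ and $r = \arcsin\rho$. The Obata equations imply $|\nabla r| \equiv 1$, so $r$ is a smooth distance function whose level sets $\{r = r_0\}$ foliate $M$, with $\partial M$ at $r = \theta$. On each level set the tangential components of $\nabla u_1,\ldots,\nabla u_k$ span a $(k-1)$-dimensional distribution isometric to $\mathbb{S}^{k-1}(\sin r)$, and its orthogonal complement carries the $N$-factor. Integrating the Riccati equation for the second fundamental form along the normal geodesics of $r$, with initial data at $r = \theta$ given by the saturated boundary conditions from the previous step, produces the warping $\sin^2 r\, g_{\mathbb{S}^{k-1}(1)} + (\cos^2 r/\cos^2\theta)\, g_N$ on each level, establishing the doubly warped product. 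The Ricci bound $Ric_{g_N} \geq \frac{n-k-1}{\cos^2\theta}\, g_N$ (for $n-k \geq 2$) then follows from the Ricci decomposition on a doubly warped product applied to the $N$-directions together with $Ric_g \geq (n-1)g$.

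The main obstacle is step two: ensuring that the Robin eigenvalue problem on $M$ admits a $k$-dimensional family of eigenfunctions at eigenvalue $n$ whose boundary restrictions recover the coordinate functions $y_1,\ldots,y_k$, and verifying the key inequality $\sum_j \mathcal{B}_j \geq 0$ sharply. The latter collapses to a trigonometric identity only when the bounds on $h$ and $H$ are combined with the identities for the $y_j$'s, reflecting the genuinely combined nature of the boundary hypothesis---the convexity on the $\mathbb{S}^{k-1}$-directions and the compensating lower bound on $H$ that accounts for the concavity along $N$---and this is precisely the setting in which the Robin-Obata framework of \cite{CLW} becomes essential.
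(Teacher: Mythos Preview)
Your construction of the $u_j$ is the genuine gap. You propose to ``solve the Robin eigenvalue problem $\Delta u + nu = 0$ with $\partial_\nu u = \cot\theta\,u$'' and then, in the Reilly computation, you \emph{also} use $u_j|_{\partial M} = y_j$. That is an over-determined system: a priori there is no reason the Robin problem has any eigenfunctions at the value $n$, and even if it does, their boundary traces need not be the coordinate functions $y_j$. You flag this yourself at the end, but it is not a technicality---without it the Reilly argument cannot start, since evaluating $\mathcal{B}_j$ requires knowing $\partial_\nu u_j$ in terms of $y_j$.

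The paper sidesteps this entirely. It imposes only the \emph{Dirichlet} condition: since $\partial M$ is not the round $(n-1)$-sphere, Reilly's theorem gives $\lambda_1^{\mathrm{Dir}} > n$, so $\Delta u + nu = 0$ in $M$, $u = f$ on $\partial M$ is uniquely solvable for each coordinate function $f$. The normal derivative $\varphi = \partial_\nu u$ is then an \emph{unknown}. Instead of an integrated Reilly identity, the paper runs a pointwise maximum-principle argument: $\phi := u^2 + |\nabla u|^2$ is subharmonic by Bochner, and a Hopf-lemma computation of $\partial_\nu\phi$ at a boundary maximum, using $h(w,w)\geq\cot\theta\,|w|^2$ on the sphere directions and $H \geq (k-1)\cot\theta - (n-k)\tan\theta$, forces $\phi$ to be constant. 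The Obata equation $\nabla^2 u + ug = 0$ \emph{and} the Robin relation $\partial_\nu u = \cot\theta\,u$ then fall out simultaneously from the equality analysis. After that, the reconstruction of the doubly warped product proceeds along lines similar to your last two paragraphs (via the inward equidistant hypersurfaces and an ODE for the induced metric), and the Ricci bound on $g_N$ follows from the warped-product curvature formulas. So the missing idea in your plan is precisely this: solve Dirichlet, not Robin, and let the maximum principle manufacture the Robin condition for you.
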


Choose $N$ to be isometric to $\mathbb{S}^{n-k}(\cos \theta)$, we get
\begin{corollary} \label{C1}Let $(M, g)$ be a compact manifold with boundary, and suppose
\begin{itemize}
\item $Ric_g\geq (n-1)g$;
\item $\partial M $ is isometric to $\mathbb{S}^{k-1}(\sin \theta)\times \mathbb{S}^{n-k}(\cos \theta)$ for some $\theta\in(0, \frac{\pi}{2})$;
\item the second fundamental form $h$ satisfies $h(w,w)\geq \cot \theta |w|^2$, $\forall w$ tangent to $\mathbb{S}^{k-1}$;
\item $H\geq (k-1)\cot \theta-(n-k) \tan \theta \geq 0$.
\end{itemize} Then $(M,g)$ is isometric to $dr^2+\sin^2 r g_{\mathbb{S}^{k-1}}+ \cos^2 r  g_{\mathbb{S}^{n-k}}$, $r\in[0,\theta]$.
This is exactly the spherical domain bounded by a generalized clifford torus
$\mathbb{S}^{k-1}(\sin \theta)\times \mathbb{S}^{n-k}(\cos \theta)\subset \mathbb{S}^n$
whose boundary has nonnegative mean curvature with respect to the outward unit normal.
\end{corollary}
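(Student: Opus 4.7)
The plan is to derive the corollary as a direct specialization of Theorem~\ref{T1}. The first step is to take $(N, g_N) = (\mathbb{S}^{n-k}(\cos\theta), g_{\mathbb{S}^{n-k}(\cos\theta)})$ in Theorem~\ref{T1}. Since $\mathbb{S}^{n-k}(\cos\theta)$ is a closed $(n-k)$-dimensional manifold, and the three boundary hypotheses in the corollary on the product structure of the boundary metric, the second fundamental form in the $\mathbb{S}^{k-1}$ directions, and the mean curvature match those of Theorem~\ref{T1} verbatim, the theorem applies with no extra verification required. The conclusion is that $M$ is isometric to the doubly warped product $dr^2+\sin^2(r)\,g_{\mathbb{S}^{k-1}(1)}+\frac{\cos^2(r)}{\cos^2\theta}\,g_N$ for $r\in[0,\theta]$.

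The second step is to simplify the warping factor. Since the round metric on the sphere of radius $\cos\theta$ satisfies $g_N=\cos^2\theta\cdot g_{\mathbb{S}^{n-k}(1)}$, the factor $\frac{\cos^2(r)}{\cos^2\theta}\,g_N$ reduces to $\cos^2(r)\,g_{\mathbb{S}^{n-k}(1)}$. Consequently the metric on $M$ simplifies exactly to the expression claimed, namely $dr^2+\sin^2 r\,g_{\mathbb{S}^{k-1}}+\cos^2 r\,g_{\mathbb{S}^{n-k}}$. (As a sanity check, the necessary condition $\mathrm{Ric}_{g_N}\geq\frac{n-k-1}{\cos^2\theta}g_N$ provided by Theorem~\ref{T1} is satisfied with equality here, since the round sphere of radius $\cos\theta$ has Ricci tensor $\frac{n-k-1}{\cos^2\theta}g_N$.)

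For the geometric identification, I would invoke the standard embedding of $\mathbb{S}^n(1)\subset\mathbb{R}^k\oplus\mathbb{R}^{n-k+1}$ given by $(r,\omega_1,\omega_2)\mapsto(\sin r\,\omega_1,\cos r\,\omega_2)$ with $\omega_1\in\mathbb{S}^{k-1}(1)$, $\omega_2\in\mathbb{S}^{n-k}(1)$, and $r\in[0,\pi/2]$. A direct computation shows that the induced round metric pulls back to precisely $dr^2+\sin^2 r\,g_{\mathbb{S}^{k-1}}+\cos^2 r\,g_{\mathbb{S}^{n-k}}$, and the slice $\{r=\theta\}$ realizes the generalized Clifford torus $\mathbb{S}^{k-1}(\sin\theta)\times\mathbb{S}^{n-k}(\cos\theta)$. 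A short calculation of the shape operator of the level sets $\{r=\text{const}\}$, evaluated at $r=\theta$, gives the outward mean curvature $(k-1)\cot\theta-(n-k)\tan\theta$, which is nonnegative by hypothesis and so matches the closing remark of the corollary.

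I do not expect any substantive obstacle: each step is either a direct appeal to Theorem~\ref{T1} or an elementary warped-product computation. The only point requiring care is tracking the normalization of $g_N$ inside the warping factor, but the scaled round metric $g_N=\cos^2\theta\cdot g_{\mathbb{S}^{n-k}(1)}$ is precisely what makes the $\cos^2\theta$ cancel and produces the expected spherical cap.
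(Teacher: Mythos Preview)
Your proposal is correct and matches the paper's approach exactly: the paper derives the corollary simply by choosing $N=\mathbb{S}^{n-k}(\cos\theta)$ in Theorem~\ref{T1}, and your additional simplification of the warping factor and geometric identification with the spherical domain are the natural elaborations of that one-line specialization.
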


\begin{remark}
This corollary exhibits a new type of spherical region where Hang-Wang type rigidity holds. Its boundary is a generalized clifford torus. It is interesting to note that clifford tori are
isoparametric hypersurfaces with two distinct principal curvatures in sphere. One might explore rigidity for other spherical regions bounded by
isoparametric hypersurfaces. It may also be related to overdetermined problems in sphere. We refer the reader to the recent survey ~\cite{G} for more discussions.
We also remark that Miao and Wang ~\cite{MW} have obtained various interesting rigidity results on manifolds with boundary under Ricci curvature lower bound.
Corollary ~\ref{C1} provides a slight improvement of Theorem 1.5 in ~\cite{MW}, in terms of the assumption on the second fundamental form.
\end{remark}

Analogously, we obtain
\begin{theorem}\label{T2} Let $(M, g)$ be a compact manifold with boundary, and suppose
\begin{itemize}
\item $Ric_g\geq -(n-1)g$;
\item $\partial M $ is isometric to $\mathbb{S}^{k-1}(\sinh \theta)\times (N, g_N)$ ($\theta>0$) and
$N$ is an $(n-k)$ dimensional closed manifold with metric $g_N$;
\item the second fundamental form satisfies $h(w,w)\geq \coth \theta |w|^2$, $\forall w$ tangent to $\mathbb{S}^{k-1}$;
\item $H\geq (k-1)\coth \theta+(n-k) \tanh \theta$.
\end{itemize}
Then $M$ is isometric to the doubly warped product $dr^2+\sinh^2(r) g_{\mathbb{S}^{k-1}(1)}+ \frac{\cosh^2(r)}{\cosh^2 \theta} g_N, r\in [0, \theta]$
and necessarily $Ric_{g_N}\geq-\frac{(n-k-1)}{\cosh^2 \theta} g_N$, if $n-k\geq 2$.
\end{theorem}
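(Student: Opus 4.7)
The strategy is to mirror the proof of Theorem~\ref{T1}, replacing $\sin,\cos,\cot,\tan$ by $\sinh,\cosh,\coth,\tanh$ and reversing the sign of the Ricci lower bound. The appropriate test function is identified by a direct computation on the target model $\widetilde M = (dr^2+\sinh^2 r\,g_{\mathbb{S}^{k-1}(1)}+(\cosh r/\cosh\theta)^2 g_N,\,[0,\theta]\times\mathbb{S}^{k-1}\times N)$: for each coordinate function $\xi$ of the embedding $\mathbb{S}^{k-1}(1)\hookrightarrow\R^k$, the product $\tilde u=\sinh(r)\,\xi$ satisfies the hyperbolic Obata equation $\nabla^2\tilde u=\tilde u\,g$, hence $\Delta\tilde u=n\tilde u$, and the Robin boundary identity $\partial_\nu\tilde u=\coth(\theta)\tilde u$ at $\{r=\theta\}$. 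The aim is to recover these identities on $M$.

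Let $\xi_1,\dots,\xi_k$ be the Euclidean coordinates on $\mathbb{S}^{k-1}(1)\subset\R^k$, pulled back to $\partial M\cong\mathbb{S}^{k-1}(\sinh\theta)\times N$ through the first factor, and solve the Dirichlet problems
\[
\Delta u_j-nu_j=0\text{ in }M,\qquad u_j=\sinh(\theta)\,\xi_j\text{ on }\partial M\quad(1\le j\le k),
\]
each of which is uniquely solvable. Applying Reilly's formula to each $u_j$, combining $Ric_g\ge -(n-1)g$ with $|\nabla^2 u_j|^2\ge 2u_j\Delta u_j-nu_j^2$ (from $|\nabla^2 u_j-u_j g|^2\ge 0$), and using the integration-by-parts identity $\int_M(nu_j^2+|\nabla u_j|^2)\,dv=\int_{\partial M}u_j(u_j)_\nu\,dA$, produces the boundary inequality
\[
\int_{\partial M}\bigl[2(u_j)_\nu\Delta_{\partial M}u_j+H(u_j)_\nu^2+h(\nabla^{\partial M}u_j,\nabla^{\partial M}u_j)-(n-1)u_j(u_j)_\nu\bigr]\,dA\le 0,
\]
with equality iff $\nabla^2 u_j=u_j g$ on $M$ and $Ric(\nabla u_j,\nabla u_j)=-(n-1)|\nabla u_j|^2$. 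After inserting $u_j=\sinh\theta\,\xi_j$ and $\Delta_{\partial M}u_j=-(k-1)\xi_j/\sinh\theta$, summing over $j$, and applying the standard identities $\sum_j\xi_j^2=1$ and $\sum_j|\nabla^{\mathbb{S}^{k-1}(1)}\xi_j|^2=k-1$, the summed boundary integrand collapses into an explicit quadratic expression in $\sum_j(u_j)_\nu^2$ and $\sum_j\xi_j(u_j)_\nu$; the hypotheses $h(w,w)\ge\coth\theta|w|^2$ on $T\mathbb{S}^{k-1}$ and $H\ge(k-1)\coth\theta+(n-k)\tanh\theta$ are precisely calibrated so that, via Cauchy-Schwarz, this expression integrates to the sharp equality saturating every Reilly inequality.

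This forces $\nabla^2 u_j=u_j g$ on $M$ and $(u_j)_\nu=\coth(\theta)u_j$ on $\partial M$ for every $j$. I would then invoke the Obata-with-Robin rigidity theorem of Chen-Lai-Wang~\cite{CLW} applied to the tuple $(u_1,\dots,u_k)$, reconstructing the geometry as follows: setting $\rho:=(\sum_j u_j^2)^{1/2}$ and $r:=\sinh^{-1}\rho$, the Obata equations yield $|\nabla r|\equiv 1$, exhibit the zero locus $\{\rho=0\}$ as an isometric copy of $(N,g_N/\cosh^2\theta)$, and prescribe the second fundamental form of each inward equidistance hypersurface $\{r=\mathrm{const}\}$ as $\coth(r)$ on $\mathbb{S}^{k-1}$-directions and $\tanh(r)$ on $N$-directions, which integrates along $\partial_r$ to the claimed doubly warped metric on $[0,\theta]\times\mathbb{S}^{k-1}\times N$. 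The Ricci bound on $(N,g_N)$ when $n-k\ge 2$ then follows by applying $Ric_g\ge-(n-1)g$ to $N$-directions and invoking the doubly warped product Ricci formula. The main technical obstacle lies in the Reilly step: because the hypothesis on $h$ only controls $\mathbb{S}^{k-1}$-directions, the boundary integrand must be split asymmetrically between $\mathbb{S}^{k-1}$- and $N$-components, with the $N$-contribution absorbed exactly by the $\tanh\theta$ term in the mean-curvature bound; verifying that these asymmetric hypotheses combine, after summation over $j$ and Cauchy-Schwarz, into the sharp integrated equality characterizing the model is the heart of the argument.
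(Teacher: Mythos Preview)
Your route diverges from the paper's: you attempt an integral Reilly-formula argument, whereas the paper (as in Theorem~\ref{T1}) uses a pointwise maximum-principle argument on the subharmonic function $\phi=\tfrac12(|\nabla u|^2-u^2)$ together with the Hopf lemma. The reconstruction of the metric from the Obata equation with Robin data (your final paragraph) is essentially the same as the paper's and is fine.

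The gap is in the Reilly step. Your boundary inequality
\[
\int_{\partial M}\Bigl[2\varphi_j\,\bar\Delta f_j+H\varphi_j^2+h(\bar\nabla f_j,\bar\nabla f_j)-(n-1)f_j\varphi_j\Bigr]\,dA\le 0
\]
is correct, but the claim that the hypotheses ``via Cauchy--Schwarz'' force the summed integrand to be $\ge 0$ is not. Writing $P=\sum_j\varphi_j^2$, $Q=\sum_j f_j\varphi_j$ and using $H\ge H_0:=(k-1)\coth\theta+(n-k)\tanh\theta$, $h(\bar\nabla f_j,\bar\nabla f_j)\ge\coth\theta|\bar\nabla f_j|^2$, the summed integrand is bounded below by
\[
H_0\,P-\Bigl[\tfrac{2(k-1)}{\sinh^2\theta}+(n-1)\Bigr]Q+(k-1)\coth\theta.
\]
With only the Cauchy--Schwarz constraint $P\ge Q^2/\sinh^2\theta$, this reduces to a quadratic in $Q$ whose two positive roots are $Q_0=\sinh\theta\cosh\theta$ and $Q_1=(k-1)/H_0<Q_0$; hence the expression is strictly negative for $Q\in(Q_1,Q_0)$ with $P=Q^2/\sinh^2\theta$. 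There is no evident additional constraint on $\int Q$ or $\int P$ that rescues the inequality. This is precisely why the paper's Remark~1.1 notes an improvement over the Miao--Wang Reilly-based result: the weak hypothesis on $h$ (only along $T\mathbb{S}^{k-1}$) does not feed cleanly into the Reilly boundary term, whereas the paper's pointwise Hopf-lemma argument on $\phi$ handles it directly through the identities $\bar\nabla\phi(p)=0$ and $\varphi(p)^2\ge\coth^2\theta f(p)^2$ at a boundary maximum. You should replace the Reilly step by that argument.
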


\begin{theorem} \label{T3}Let $(M, g)$ be a compact manifold with boundary, and suppose
\begin{itemize}
\item $Ric_g\geq 0$;
\item $\partial M $ is isometric to $\mathbb{S}^{k-1}( \theta)\times (N, g_N)$ ($\theta>0$) and
$N$ is an $(n-k)$ dimensional closed manifold with metric $g_N$;
\item the second fundamental form satisfies $h(w,w)\geq  \frac{1}{\theta} |w|^2$, $\forall w$ tangent to $\mathbb{S}^{k-1}$;
\item $H\geq \frac{k-1}{\theta}$.
\end{itemize}
Then $M$ is isometric to $D^k(\theta) \times N$ with the product metric, where $D^k(\theta)$  is the Euclidean ball of radius $\theta$ and necessarily $Ric_{g_N}\geq 0$, if $n-k\geq 2$.
\end{theorem}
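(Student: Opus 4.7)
The plan is to adapt the Hang--Wang strategy used for Theorems~\ref{T1} and \ref{T2} to the zero-Ricci setting, where the relevant Obata-type equation degenerates to $\nabla^2 f = 0$. First I would let $x_1,\dots,x_k$ denote the standard coordinate functions of $\mathbb{R}^k$ restricted to $\mathbb{S}^{k-1}(\theta)$, pulled back to $\partial M$ as functions constant along the $N$-factor, and then solve the Dirichlet problems
\[
\Delta f_i=0 \text{ on } M,\qquad f_i|_{\partial M}=x_i\quad (i=1,\dots,k).
\]
On the model $D^k(\theta)\times N$ these extensions are literally $f_i(x,y)=x_i$, so that $\nabla^2 f_i=0$ and $\partial_\nu f_i = x_i/\theta$; the task is to force these identities in general.

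Next I would apply Reilly's formula to each $f_i$ and sum over $i$. Using the spectral identities $\Delta^{\partial M} x_i = -\tfrac{k-1}{\theta^2}x_i$ and $\sum_i |\nabla^{\partial M}x_i|^2 = k-1$, together with the hypothesis $h(w,w)\ge \tfrac{1}{\theta}|w|^2$ for $w$ tangent to the $\mathbb{S}^{k-1}$-factor, the summed boundary integrand is bounded below by
\[
H\sum_i (\partial_\nu f_i)^2 - \tfrac{2(k-1)}{\theta^2}\sum_i x_i\,\partial_\nu f_i + \tfrac{k-1}{\theta}.
\]
Completing the square in $\partial_\nu f_i$ rewrites this as
\[
H\sum_i\!\left(\partial_\nu f_i - \tfrac{k-1}{H\theta^2}x_i\right)^2 + \tfrac{k-1}{\theta}\!\left(1-\tfrac{k-1}{H\theta}\right),
\]
both terms being pointwise nonnegative thanks to $H\ge \tfrac{k-1}{\theta}$. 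Combined with $Ric_g\ge 0$ and $\Delta f_i=0$, Reilly's formula forces $\nabla^2 f_i\equiv 0$ on $M$, along with $\partial_\nu f_i = x_i/\theta$ and $H\equiv \tfrac{k-1}{\theta}$ on $\partial M$.

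With each $\nabla f_i$ parallel, the inner products $\langle\nabla f_i,\nabla f_j\rangle$ are constant on $M$, and a boundary computation using $\langle \nabla^{\mathbb{S}^{k-1}(\theta)}x_i,\nabla^{\mathbb{S}^{k-1}(\theta)}x_j\rangle = \delta_{ij}-x_ix_j/\theta^2$ together with $\partial_\nu f_i = x_i/\theta$ yields $\langle \nabla f_i,\nabla f_j\rangle = \delta_{ij}$ throughout $M$. Thus $F:=(f_1,\dots,f_k):M\to \mathbb{R}^k$ is a Riemannian submersion with totally geodesic fibers, and the convex function $\tfrac12|F|^2$ attains its maximum $\tfrac{\theta^2}{2}$ on $\partial M$, so $F(M)=\overline{D^k(\theta)}$. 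A de Rham splitting (using that $\mathrm{span}\{\nabla f_i\}$ and its orthogonal complement are both parallel distributions) combined with the boundary identification $F^{-1}(q)\cap\partial M = \{q\}\times N$ for $q\in\mathbb{S}^{k-1}(\theta)$ yields $(M,g)\cong D^k(\theta)\times N$ with the product metric, and the Ricci decomposition of the product together with $Ric_g\ge 0$ then forces $Ric_{g_N}\ge 0$ when $n-k\ge 2$.

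The main obstacle is the algebraic step in the Reilly formula: the sum-of-squares rewrite works only because the two hypotheses $h\ge \tfrac{1}{\theta}\,g|_{\mathbb{S}^{k-1}}$ and $H\ge \tfrac{k-1}{\theta}$ are the sharp values realized on the model $D^k(\theta)\times N$, and the threshold $H\theta=k-1$ is precisely where the residual term $\tfrac{k-1}{\theta}(1-\tfrac{k-1}{H\theta})$ vanishes. Everything downstream---parallel transport of the orthonormal frame, de Rham product decomposition, and matching of boundary data---is essentially mechanical once the Obata-type identity $\nabla^2 f_i\equiv 0$ and the sharp boundary identities are secured.
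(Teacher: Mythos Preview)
Your argument is correct, but it follows a genuinely different route from the paper's own proof. The paper treats Theorem~\ref{T3} in parallel with Theorems~\ref{T1} and~\ref{T2}: for a \emph{single} coordinate function $f$ it solves $\Delta u=0$ with $u|_{\partial M}=f$, applies the Bochner formula to $\phi=\tfrac12|\nabla u|^2$, and runs a pointwise Hopf--lemma/maximum--principle argument (with a two-case analysis at the boundary maximum) to force $\phi$ constant; this yields the Obata system $\nabla^2 u=0$, $\partial_\nu u=\tfrac{1}{\theta}u$. It then reads off the geometry by propagating the Robin condition to the inward equidistant hypersurfaces $\partial M_t$, computing the principal curvatures there, and solving an ODE for the induced metrics $\alpha(t),\beta(t)$. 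Your approach instead sums Reilly's integral identity over \emph{all} $k$ coordinate functions and uses the completing-the-square trick to make the boundary integrand pointwise nonnegative; equality then delivers $\nabla^2 f_i\equiv 0$ and $\partial_\nu f_i=x_i/\theta$ in one stroke, after which the parallel orthonormal frame $\{\nabla f_i\}$ and a de~Rham--type splitting give the product structure directly. Your route is cleaner in the Ricci-flat case (no Hopf case analysis, no equidistant foliation), but it relies on a global product decomposition for a manifold with boundary that deserves a sentence of justification (e.g.\ that $F=(f_1,\dots,f_k)$ is a proper Riemannian submersion onto $\overline{D^k(\theta)}$ with totally geodesic horizontal leaves and fibers, so Hermann's theorem or an elementary flow argument gives the global product). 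The paper's method, by contrast, transfers verbatim to the $Ric\ge\pm(n-1)g$ cases where no flat factor is available, which is why the author chose it.
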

\begin{remark}
Notice that when $N$ is a one-dimensional closed manifold, namely a circle, then $M$ in Theorem~\ref{T2} is in fact a hyperbolic manifold as the sectional curvature
of $dr^2+\sinh^2(r) g_{\mathbb{S}^{n-2}(1)}+ \frac{\cosh^2(r)}{\cosh^2 \theta} g_N, r\in [0, \theta]$ is $\equiv-1$.
$M$ in Theorem~\ref{T3} is a flat manifold $D^{n-1}(\theta)\times \mathbb{S}^1$.
\end{remark}

In the next section, we first present the detailed proof of Theorem~\ref{T1}, then indicate necessary changes for proofs of Theorem~\ref{T2} and Theorem~\ref{T3}.

\section{Proof of the main theorem}
\begin{proof}[Proof of Theorem ~\ref{T1}]
The proof follows closely with the proof in ~\cite{HW}. First let us recall Reilly's result~\cite{R}.
Let $(M,g)$ be a compact manifold with boundary and $Ric_g\geq (n-1)g$. Suppose $\partial M$ is
mean convex, i.e., the mean curvature $H\geq 0$, then the first eigenvalue $\lambda_1$ of the Laplacian
operator with the Dirichlet boundary condition satisfies $\lambda_1\geq n$. Moreover the equality holds if and only if
$(M,g)$ is isometric to the standard hemisphere.

In view of assumptions on the boundary, we have $\lambda_1>n$. Thus there exists a unique solution to
\begin{align} \label{equ:u}
\left\{
  \begin{array}{ll}
    \Delta u+nu=0, & \text{in $M$}; \\
    u=f, &\text{on $\partial M$},
  \end{array}
\right.
\end{align}
for any $f \in C^{\infty}(\partial M)$.

Denote by $\bar{g}$ the induced metric of $M$ on $\partial M$, and by $\bar{\nabla}$, $\bar{\Delta}$ the induced
operators with respect to $\bar{g}$ on $\partial M$.

Since $\partial M$ is isometric to $\mathbb{S}^{k-1}(\sin \theta)\times N$, we could choose $f$ to be one of the coordinate functions on $\mathbb{S}^{k-1}$ that does not depend on the second factor.
By direct computations, we have
\begin{align} \label{1}
\bar{\Delta} f+(k-1)\csc^2 \theta f&=0 \quad \text{on $\partial M$},  \\
 \label{2}
\sin^2 \theta  |\bar{\nabla}f|^2+f^2&=\sin^2 \theta  \quad \text{on $\partial M$}.
\end{align}

Set $\frac{\partial u}{\partial \nu}=:\varphi$ and $u^2+|\nabla u|^2=:\phi$.
By the Bochner formula, $Ric\geq (n-1)g$ and (\ref{equ:u}), we have
\begin{align} \notag
\Delta \phi&= \Delta (u^2+|\nabla u|^2 ) \\ \notag
 &=2\left( u \Delta u+ |\nabla u|^2 + |\nabla^2 u|^2 + \nabla \Delta u \cdot \nabla u+ Ric(\nabla u, \nabla u)\right) \\ \notag
 &\geq 2\left( -nu^2 +|\nabla u|^2+ \frac{(\Delta u)^2}{n} -n |\nabla u|^2+ (n-1) |\nabla u|^2\right) \\   \notag
 &=0.
\end{align}

Our goal is to show $\phi$ is constant. Suppose not, then there exists $p
\in \partial M$ such that
\[
\phi(p)=\max _{x\in M} \phi(x) \quad \text{and} \quad \frac{\partial \phi}{\partial \nu}(p)>0.
\]
The latter is due to the Hopf lemma.

We compute $\frac{\partial \phi}{\partial \nu}$ as follows:
\begin{align} \label{11}
\frac{1}{2}\frac{\partial \phi}{\partial \nu}&= u \frac{\partial u}{\partial \nu}+D^2u(\nu, Du)\\ \notag
&=\varphi(f+D^2u(\nu, \nu))+ D^2 u(\nu, \bar{\nabla} f) \\ \notag
&=\varphi(f+D^2u(\nu, \nu))+ \nabla_{\bar{\nabla} f} \nabla u \cdot \nu \\ \notag
 &= \varphi(f+D^2u(\nu, \nu))+ \bar{\nabla} f\cdot \bar{\nabla} \varphi- h(\bar{\nabla} f, \bar{\nabla} f).
\end{align}

Set $A=\varphi(f+D^2u(\nu, \nu))$ and $B=\bar{\nabla} f\cdot \bar{\nabla} \varphi- h(\bar{\nabla} f, \bar{\nabla} f)$.

On $\partial M$, we have
\[
0=\Delta u+n u= \bar{\Delta} f+H \varphi+D^2u (\nu, \nu)+nf.
\]
Plugging (\ref{1}) in the above, it follows
\begin{align}  \label{4}
A&= \varphi(-H\varphi+(k-1) \cot^2 \theta f-(n-k)f) \\ \notag
 &\leq ((k-1)\cot \theta-(n-k) \tan \theta)\varphi [\cot \theta  f-\varphi],
\end{align}
where we have used the fact $H\geq (k-1)\cot \theta-(n-k) \tan \theta\geq0$.

By (\ref{2}), $\phi|_{\partial M}= 1+\varphi^2-\cot^2 \theta f^2$.
Since $p$ is maximal for $\phi$ and $f$ takes value $0$ somewhere, it follows that $\varphi^2(p)\geq \cot^2 \theta  f(p)^2$ and
\begin{align} \label{5}
0=\bar{\nabla }\phi (p)=2\varphi(p) \bar{\nabla}\varphi(p) -2 \cot^2 \theta f(p) \bar{\nabla} f(p).
\end{align}

There are two cases:
\begin{itemize}
  \item
 Case 1: $\varphi(p)\neq 0$. Since changing $f$ to $-f$ results a change $\varphi$ to $-\varphi$, but leaves
  $\phi$ unchanged, thus we can without loss of generality assume that $\varphi(p)>0$. Hence $-\varphi(p)\leq \cot \theta f(p)\leq \varphi(p)$,
  from which we obtain $A(p)\leq 0$.
  For $B$, we infer by (\ref{5}) that
  \begin{align} \label{6}
  B(p)&=\cot^2\theta \frac{f(p)}{\varphi(p)} |\bar{\nabla} f|^2- h(\bar{\nabla} f, \bar{\nabla} f) \\ \notag
 &\leq\cot \theta |\bar{\nabla} f|^2(\cot \theta \frac{f(p)}{\varphi(p)}-1)\leq 0.
  \end{align}
  Here we use the fact $\bar{\nabla} f$ is tangent to $\mathbb{S}^{k-1}$, where $h\geq \cot \theta $.
  This contradicts with $\frac{\partial \phi}{\partial \nu}(p)>0$.
  \item Case 2: $\varphi(p)=0$. In this case, $f(p)=0$ as well and $A(p)=0$. For $B$,
we use the fact $p$ is the maximum point of $\phi$, thus for any $X\in T(\partial M)$, we have
  \begin{align} \notag
  0\geq \bar{\nabla}^2_{X, X} \phi (p) &=\bar{\nabla}^2_{X, X}(1+\varphi^2-\cot^2 \theta f^2) (p) \\ \notag
                 &=2(X \cdot \bar{\nabla} \varphi)^2-2\cot^2 \theta (X\cdot \bar{\nabla}f)^2.
   \end{align}
Here we use the fact $p$ is minimum for both $\varphi^2$ and $f^2$.
   Letting $X=\bar{\nabla}f$, we get $\cot^2 \theta |\bar{\nabla}f|^2\geq \bar{\nabla}f\cdot \bar{\nabla} \varphi$,
which implies $B(p)\leq 0$. We again get a contradiction.
\end{itemize}

Therefore $\phi$ is constant on $M$, which implies $D^2u +ug=0$
in $M$. Moreover $\varphi^2-\cot^2 \theta f^2$ is also a constant on $\partial M$. Since
\[
0=\frac{\partial \phi}{\partial \nu}= A+B,
\]
combining the estimate (\ref{4}) and (\ref{6}), we have $\varphi=\cot \theta f$.
Plugging it back to (\ref{11}), we get
\begin{align} \notag
0&\equiv A+B =\varphi(f+D^2u(\nu, \nu))+\bar{\nabla} f\cdot \bar{\nabla} \varphi- h(\bar{\nabla} f, \bar{\nabla} f) \\ \notag
& =\varphi^2(-H+(k-1)\cot \theta-(n-k) \tan \theta)+ \cot \theta |\bar{\nabla} f|^2 -h(\bar{\nabla} f, \bar{\nabla} f).
\end{align}

In view of assumptions that $H\geq (k-1)\cot \theta-(n-k) \tan \theta$ and $h(\bar{\nabla} f, \bar{\nabla} f)\geq \cot \theta |\bar{\nabla}f|^2$, we infer that
$H\equiv (k-1)\cot \theta-(n-k) \tan \theta$ and $h(\bar{\nabla} f, \bar{\nabla} f)=\cot \theta |\bar{\nabla}f|^2$.

We summarize the known facts so far: we have found $u$ satisfying the Obata equation with the Robin boundary condition:
\begin{align} \label{7}
\left\{
  \begin{array}{ll}
    \nabla^2u +ug=0, & \text{in $M$} \\
    \frac{\partial u}{\partial \nu}-\cot \theta u=0, & \text{on $\partial M$}.
  \end{array}
\right.
\end{align}

An important fact is that there exists indeed a family of functions satisfying (\ref{7}) corresponding to the boundary value $f$. In particular,
there exist coordinate functions $f_1, \cdots f_k$ on $\mathbb{S}^{k-1}$ such that $\{\bar{\nabla} f_i\}_{i=1}^{k}$ span the tangent subspace along $\mathbb{S}^{k-1}$,
therefore we get
\[
h(v,v)\equiv \cot \theta, \quad \forall \text{ unit
vector tangent $v$ along $\mathbb{S}^{k-1}$}.
\]

To determine the metric structure of $M$, let $\partial M_t$ be the inward $t$-equidistance hypersurface of $\partial M$, where $M_{t}:=\{x\in M| dist(x, \partial M)\geq t\}$.
Thus for small $t$, say $t\in (0, t_0)$, $\partial M_t$ is diffeomorphic to $\partial M$.
The diffeomorphism $\Pi_t: \partial M \to \partial M_t$ is explicitly given by the inward normal exponential map, i.e. $\Pi_t(x)=\exp_x(-t\nu(x))$, for $x\in \partial M$.

Along a normal geodesic $\gamma_x(t)$ with $\gamma_x(0)=x$ and $\gamma_x'(0)=-\nu(x)$, $u$ satisfies
\[
u''\circ(\gamma_x(t))+u\circ(\gamma_x(t))=0.
\]
Note that $u'\circ(\gamma_x(0))=-\frac{\partial u}{\partial \nu}=-\cot \theta u(0)$ by (\ref{7}), it follows that
\begin{align} \label{12}
u\circ(\gamma_x(t))= u(0) \cos t - \cot\theta u(0) \sin t.
\end{align}
Then
\[
u'\circ(\gamma_x(t))=-u(0) \sin t-\cot \theta u(0) \cos t.
\]
Thus on $\partial M_t$,
\begin{align} \notag
\frac{\partial u}{\partial t}-\frac{\sin t+\cot \theta \cos t}{\cos t-\cot \theta \sin t} u=0.
\end{align}
For simplicity, let us set $\frac{\sin t+\cot \theta \cos t}{\cos t-\cot \theta \sin t}:=a(t)$. Hence the Obata equation (\ref{7}) holds on $M_{t}:=\{x\in M| dist(x, \partial M)\geq t\}$ subject to $a(t)$-Robin boundary condition
on $\partial M_t$:
\begin{align} \label{10}
    \frac{\partial u}{\partial \nu_t}-a(t) u=0.
\end{align}

Let $T_{t}\mathbb{S}^{k-1}$ and $T_{t} N$ be distributions of tangent vectors of $\partial M_t$ tangential to $\mathbb{S}^{k-1}$ and $N$ respectively.  We \textbf{claim} that $T_{t} \mathbb{S}^{k-1} \perp T_{t} N$ at each point of $\partial M_t$.

To this end, we can again choose the boundary value $f$ to be coordinate functions $f^{(1)}, \cdots f^{(k)}$ such that $\{ \bar{\nabla} f^{(i)}\}_{i=1}^{k}$ span the tangent subspace along $\mathbb{S}^{k-1}$.  Let $u^{(i)}$, $i=1, \cdots ,k$ be the corresponding solutions, thus they all satisfy (\ref{7}) and (\ref{10}).
For any $p\in \mathbb{S}^{k-1}$, by (\ref{12}) it follows that the restriction of $u^{(i)}$ on $\Pi_t(\{p\}\times N)$ is constant, thus
$\overline{\nabla_t} u^{(i)}$ is perpendicular to the tangent subspace along $\Pi_t(\{p\}\times N)$, which belongs to $T_{t}N$. Clearly $\overline{\nabla_t} u^{(i)}$ span the tangent
subspace along the $\mathbb{S}^{k-1}$ factor of $\partial M_t$, the claim thus follows.

Let $\{e_1, \cdots e_{n-1},e_n=\nu_t\}$ be an orthonormal frame, such that $\{e_1, \cdots, e_{k-1}\}$ forms an orthonormal frame
of $T_{t} \mathbb{S}^{k-1}$ and $\{e_k, \cdots, e_{n-1}\}$ forms an orthonormal frame of $T_{t} N$.

Using (\ref{7}) and (\ref{10}), for $i\neq n$, we have
\begin{align} \notag
0=\nabla^2_{e_i, e_n} u&= \nabla_{e_i}\nabla_{e_n} u-\nabla_{\nabla_{e_i} e_n} u \\ \notag
&=a(t) u_i- h(t)_{ij} u_j.
\end{align}
Hence $\overline{\nabla_t} u$ is an eigenvector of $h(t)$ with respect to the eigenvalue $a(t)$.  In view of above different choices of $u^{(i)}$, we have
\[
h(t)_{ii}=a(t), \quad \forall 1\leq i\leq k-1.
\]

Restricting to the boundary $\partial M_t$, (\ref{7}) reads as
\begin{align} \label{8}
\overline{\nabla_t}^2_{e_i, e_j} u+ a(t) u h(t)+ u\bar{g}(t)=0.
\end{align}

Let $\widetilde{\nabla_t}$ denote the induced Levi-Civita connection of $\bar{g}(t)$ on its submanifolds of the form
\[
\Pi_t(\mathbb{S}^{k-1}\times \{q\}) , \quad q\in N \quad \text{or} \quad \Pi_t(\{p\} \times N) ,\quad p \in \mathbb{S}^{k-1}.
\]

We have
\[
\overline{\nabla_t}^2_{e_i, e_j}u=\widetilde{\nabla_t}^2_{e_i, e_j}u- \overline{\nabla_t}_{(\overline{\nabla_t}_{e_i}e_j)^{\bot}} u, \quad 1\leq i,j\leq k-1.
\]
Since $(\overline{\nabla_t}_{e_i}e_j)^{\bot}$ is along $N$-factor direction, the last term vanishes. From (\ref{8}), we infer
\[
\widetilde{\nabla_t}^2_{e_i, e_j}u+ (1+a(t)^2) u\tilde{g}=0,
\] holds on $\mathbb{S}^{k-1}\times \{q\}$ for any $q\in N$. By Obata theorem~\cite{O}, it follows that $\Pi_t(\mathbb{S}^{k-1}\times \{q\})$ is isometric to
the round sphere of radius $\frac{1}{\sqrt{1+a(t)^2}}$.

Similarly
\[
\overline{\nabla_t}^2_{e_i, e_j}u=\widetilde{\nabla_t}^2_{e_i, e_j}u- \overline{\nabla_t}_{(\bar{\nabla_t}_{e_i}e_j)^{\bot}} u, \quad k\leq i,j\leq n-1.
\]
For fixed $p\in \mathbb{S}^{k-1}$, we may choose the boundary value $f$ to be the cosine of the distance function to $p$ (the coordinate function which attains maximum at $p$). Let $u$ be the corresponding solution, then $\overline{\nabla }u$ vanishes on $\Pi_t(p \times N)$.
Thus restricting (\ref{8}) on $\Pi_t(p\times N)$, we have
\[
a(t) u h(t)+ u \bar{g}(t)=0,
\]
which implies $h(t)_{jj}=-\frac{1}{a(t)}$, $\forall k\leq j\leq n-1$.

Therefore, we have shown that $h(t)$ on $\partial M_t$ has two distinct principal curvatures:
\begin{align}\notag
a(t) \quad & \text{of multiplicity $k-1$},\\ \notag
-\frac{1}{a(t)} \quad & \text{of multiplicity  $n-k$}.
\end{align}

Plugging this back to (\ref{8}) again, by the fact $u$'s restriction on $\Pi_t(\{p\}\times N)$ is constant and
\[
\overline{\nabla_t}^2_{e_i, e_j}u=\widetilde{\nabla_t}^2_{e_i, e_j}u- \overline{\nabla_t}_{(\overline{\nabla_t}_{e_i}e_j)^{\bot}} u, \quad k\leq i,j\leq n-1,
\] we indeed have
\[
\overline{\nabla_t}_{(\overline{\nabla_t}_{e_i}e_j)^{\bot}} u=0 \quad \forall u,
\] which
implies that $\Pi_t(\{p\}\times N) $ is totally geodesic in $\partial M_t$ for any $p \in \mathbb{S}^{k-1}$. Hence $\bar{g}(t)$ is a product metric. Writing $\bar{g}(t)= \alpha(t)_{\mathbb{S}^{k-1}}\oplus \beta(t)_{N}$, where $\alpha(t)$ and $\beta(t)$ are
two families of metrics on $\mathbb{S}^{k-1}$ and $N$ respectively. We thus have
\begin{align} \label{9}
\left(
  \begin{array}{cc}
    \frac{1}{2}  \alpha'(t) & 0 \\
    0 & \frac{1}{2} \beta'(t) \\
  \end{array}\right)=\left(
                \begin{array}{cc}
                  -a(t)\alpha(t) & 0 \\
                  0 & \frac{1}{a(t)} \beta(t) \\
                \end{array}
              \right).
\end{align}
The boundary condition means
\[
\alpha(0)=\sin^2 \theta g_{\mathbb{S}^{k-1}(1)}, \quad \beta(0)=g_{N}.
\]
Solving (\ref{9}), we obtain
\[
\alpha(t)= \sin^2 (\theta-t) g_{\mathbb{S}^{k-1}(1)}, \quad \beta(t)=\frac{\cos^2(\theta-t)}{\cos^2\theta} dg_{N}.
\]

So far we only determine the metric for small $ t\in(0,t_{0})$. However, an explicit calculation based on the Riccati's equation
 indicates there exists no focal point of $\partial M $ for $t=t_0$. Thus by continuity of the metric, $\partial M_{t_0}$ is isometric to $\mathbb{S}^{k-1}(\sin\theta-t_0)\times N$ with a
product metric. Therefore, we could continue the same argument as long as $a(t)$ is well defined, i.e, for $t\in[0,\theta)$.

Hence $g=dt^ 2+  \sin^2 (\theta-t) g_{\mathbb{S}^{k-1}(1)}+ \frac{\cos^2(\theta-t)}{\cos^2\theta} dg_{N}$ for $t\in[0,\theta)$.
After metric completion, which corresponds to a submanifold diffeomorphic to $N$ for $t=\theta$, it yields
a compact manifold with boundary, thus it is necessarily the whole $M$.
Letting $r=\theta-t$, then $(M,g)$ is isometric to the doubly warped product
\[
dr^2+\sin^2(r)  g_{\mathbb{S}^{k-1}(1)}+ \frac{\cos^2(r)}{\cos^2 \theta} g_N, \quad r\in [0, \theta].
\]

By direction computation of the Ricci curvature for this doubly warped product, we have
\begin{align} \notag
Ric(\partial_r, \partial_r)&=(n-1) , \\   \notag
Ric(X,X)&=(n-1),  \text{ $\forall X$ unit vector tangent to $\mathbb{S}^{k-1}$ factor,} \\  \notag
Ric(Y,Y)&= Ric_{g_N}(Y,Y) +k -(n-k-1)\frac{\sin^2 r}{\cos^2 r}, \text{$\forall Y$ unit vector tangent to $N$ factor} .
\end{align}
The assumption $Ric_g\geq (n-1)g$ implies that $Ric_{g_N}\geq \frac{(n-k-1)}{\cos^2 \theta} g_N$.
 \end{proof}

Using the same method, we can prove Theorem~\ref{T2} and Theorem~\ref{T3}.
\begin{proof}[Proof of Theorem~\ref{T2}]
Let $u$ be a solution of
\begin{align}  \notag
\left\{
  \begin{array}{ll}
    \Delta u-nu=0, & \text{in $M$}; \\
    u=f, &\text{on $\partial M$},
  \end{array}
\right.
\end{align}
for any $f \in C^{\infty}(\partial M)$. Note it is always solvable. Take $f$ to be a coordinate function on $\mathbb{S}^{k-1}$, which does not depend on the $N$ factor. This time we consider $\phi:=\frac{1}{2} (|\nabla u|^2- u^2)$.
A similar argument shows that $\phi=\text{const}$ and consequently $u$ satisfies
\begin{align}  \label{17}
\left\{
  \begin{array}{ll}
    D^2u -ug=0, & \text{in $M$} \\
    \frac{\partial u}{\partial \nu}-\coth \theta u=0, & \text{on $\partial M$}.
  \end{array}
\right.
\end{align}

Along a normal geodesic $\gamma_x(t)$ with $\gamma_x(0)=x\in \partial M$ and $\gamma_x'(0)=-\nu(x)$, $u$ satisfies
\[
u''\circ(\gamma_x(t))-u\circ(\gamma_x(t))=0.
\]
Note that $u'\circ(\gamma_x(0))=-\frac{\partial u}{\partial \nu}=-\coth \theta u(0)$ by (\ref{17}), it follows that
\[
u\circ(\gamma_x(t))= u(0) \cosh t - \coth\theta u(0 )\sinh t.
\]
Thus on the $t$-equidistance hypersurface $\partial M_t$,
\begin{align}  \notag
\frac{\partial u}{\partial t}-\frac{\coth \theta \cosh t-\sinh t}{\cosh t-\coth \theta \sinh t} u=0.
\end{align}
For simplicity, let $b(t)=\frac{\coth \theta \cosh t-\sinh t}{\cosh t-\coth \theta \sinh t}$. A similar argument shows that there are two distinct principal
curvatures: $b(t)$ whose eigenspace is the tangent subspace along $\mathbb{S}^{k-1}$ and $\frac{1}{b(t)}$ whose eigenspace is the tangent subspace
along $N$. Similarly $\bar{g}(t)=\alpha(t)_{\mathbb{S}^{k-1}}\oplus \beta(t)_{N}$ is a product metric on the $t$-equidistance hypersurface $\partial M_t$, and we get the metric ODE
\begin{align} \notag
\left(
  \begin{array}{cc}
    \frac{1}{2}  \alpha'(t) & 0 \\
    0 & \frac{1}{2} \beta'(t) \\
  \end{array}\right)=\left(
                \begin{array}{cc}
                  -b(t)\alpha(t) & 0 \\
                  0 & -\frac{1}{b(t)} \beta(t) \\
                \end{array}
              \right).
\end{align}
Solving it using the boundary condition, we get
\[
\alpha(t)=\sinh^2 (\theta-t) g_{\mathbb{S}^{k-1}(1)}, \quad \beta(t)=\cosh^2(\theta-t) g_N.
\]
The desired result thus follows.
\end{proof}

\begin{proof}[Proof of Theorem~\ref{T3}]
Let $u$ be a solution of
\begin{align} \notag
\left\{
  \begin{array}{ll}
    \Delta u=0, & \text{in $M$}; \\
    u=f, &\text{on $\partial M$},
  \end{array}
\right.
\end{align}
for any $f \in C^{\infty}(\partial M)$. Take $f$ to be a coordinate function of $\mathbb{S}^{k-1}$, which does not depend on the $N$ factor.
We consider $\phi:=\frac{1}{2} |\nabla u|^2$. A similar argument shows that $\phi=\text{const}$ and thus we have $u$ satisfies
\begin{align} \label{27}
\left\{
  \begin{array}{ll}
    D^2u =0, & \text{in $M$} \\
    \frac{\partial u}{\partial \nu}- \frac{1}{\theta} u=0, & \text{on $\partial M$}.
  \end{array}
\right.
\end{align}

Thus along a normal geodesic $\gamma_x(t)$ with $\gamma_x(0)=x\in \partial M$ and $\gamma_x'(0)=-\nu(x)$, $u$ satisfies
\[
u''\circ(\gamma_x(t))=0.
\]
Note that $u'\circ(\gamma_x(0))=-\frac{\partial u}{\partial \nu}=- \frac{1}{\theta} u(0)$ by (\ref{27}), it follows that
\[
u\circ(\gamma_x(t))= u(0)  -\frac{u(0)}{\theta} t.
\]
Thus on the $t$-equidistance hypersurface $\partial M_t$,
\begin{align}  \notag
\frac{\partial u}{\partial t}-\frac{1}{\theta-t} u=0.
\end{align}
For simplicity, let $c(t)=\frac{1}{\theta-t}$.

Again, there are
two distinct principal curvatures $\frac{1}{c(t)}$ and $0$, corresponding to the tangent subspace along $\mathbb{S}^{k-1}$ and $N$, respectively. The metric $\bar{g}(t)$ is of
a product metric, and we get the corresponding metric ODE
\begin{align} \notag
\left(
  \begin{array}{cc}
    \frac{1}{2}  \alpha'(t) & 0 \\
    0 & \frac{1}{2} \beta'(t) \\
  \end{array}\right)=\left(
                \begin{array}{cc}
                  -\frac{1}{c(t)}\alpha(t) & 0 \\
                  0 & 0  \\
                \end{array}
              \right).
\end{align}

In view of the boundary condition, we get
\[
\alpha(t)=(\theta-t)^2 g_{\mathbb{S}^{k-1}(1)}, \quad \beta(t)=g_N,
\]
which leads to the desired conclusion.
\end{proof}

\end{document}